\numberwithin{equation}{section}
\theoremstyle{plain}
\newtheorem{theorem}{Theorem}%[section]
\newtheorem{corollary}[theorem]{Corollary}
\theoremstyle{definition}
\theoremstyle{remark}
\newcommand{\Z}{{\mathbb Z}} %cph changed from \mathbf
\newcommand{\Q}{{\mathbb Q}}
\newcommand{\R}{{\mathbb R}}
\renewcommand{\Re}{\operatorname{Re}}
\newcommand{\N}{\operatorname{N}}
\newcommand{\dd}{\mathrm{d}}
\newcommand{\Norm}{\operatorname{Norm}}
\newcommand{\E}{\mathbb E}
\newcommand{\ave}[1]{\left\langle#1\right\rangle} %  average
\newcommand{\Var}{\operatorname{Var}}
\def\@tocline#1#2#3#4#5#6#7{\relax
  \ifnum #1>\c@tocdepth % then omit
  \else
    \par \addpenalty\@secpenalty\addvspace{#2}%
    \begingroup \hyphenpenalty\@M
    \@ifempty{#4}{%
      \@tempdima\csname r@tocindent\number#1\endcsname\relax
    }{%
      \@tempdima#4\relax
    }%
    \parindent\z@ \leftskip#3\relax \advance\leftskip\@tempdima\relax
    \rightskip\@pnumwidth plus4em \parfillskip-\@pnumwidth
    #5\leavevmode\hskip-\@tempdima
      \ifcase #1
       \or\or \hskip 1em \or \hskip 2em \else \hskip 3em \fi%
      #6\nobreak\relax
    \hfill\hbox to\@pnumwidth{\@tocpagenum{#7}}\par% <---- \dotfill -> \hfill
    \nobreak
    \endgroup
  \fi}
\begin{document}

\title[Gaussian primes in almost all narrow sectors]
{Gaussian primes in almost all narrow sectors}
\author{Bingrong Huang, Jianya Liu, and Ze\'ev Rudnick}
\address{School of Mathematical Sciences \\ Tel Aviv University \\ Tel Aviv \\ Israel}
\email{bingronghuangsdu@gmail.com}
\address{School of Mathematics \\ Shandong University \\ Jinan \\Shandong 250100 \\China}
\email{jyliu@sdu.edu.cn}
\address{School of Mathematical Sciences \\ Tel Aviv University \\ Tel Aviv \\ Israel}
\email{rudnick@post.tau.ac.il}
\thanks{The research of B.H. and Z.R. was supported by the European Research Council under the European Union's Seventh Framework Programme (FP7/2007-2013) / ERC grant agreement n$^{\text{o}}$ 786758;
the research of J.L. was supported by the National Science Foundation of China under Grant 11531008, the Ministry of Education of China under
Grant IRT16R43, and the Taishan Scholar Project of Shandong Province.}

\date{\today}

\begin{abstract}
 We study Gaussian primes lying in narrow sectors, and show that almost all such sectors contain the expected number of primes, if the sectors are not too narrow.
 %We give an unconditional result about Gaussian primes in almost all narrow sectors by bounding the number variance using the zero-density estimate.
\end{abstract}

\keywords{Angle, almost all, Gaussian prime, narrow sector, variance}
\maketitle
%\setcounter{tocdepth}{1}%使目录只显示到节
%\tableofcontents

%%%%%%%%%%%%%%%%%%%%%%%%%%%%%%%%%%%%%%%%%%%%%%%%%%%%%%%%%%%%%%%%
%%%%%                        Section                       %%%%%
%%%%%%%%%%%%%%%%%%%%%%%%%%%%%%%%%%%%%%%%%%%%%%%%%%%%%%%%%%%%%%%%
\section{Introduction} \label{sec:Intr}

%\subsection{Gaussian primes in narrow sectors}
Let $\mathfrak{p}$ be a prime ideal in the ring of Gaussian integers $\mathbb{Z}[i]$.
  Assuming the norm $\N(\mathfrak p)=p=1\bmod 4$ is a split prime, we can write $p=a^2+b^2$ with $\alpha=a+ib$ a generator of $\mathfrak p$, unique up to multiplication by one of the units $\pm1,\pm i $.  This gives us an angle $\theta_{\mathfrak p}$, so that $\alpha=a+ib=\sqrt{p}e^{i\theta_{\mathfrak p}}$, which is unique if we fix it to lie in $[0,\pi/2)$.

Hecke \cite{Hecke} proved that the angles $\theta_\mathfrak{p}$ are equidistributed in $[0,\pi/2)$ as $\mathfrak{p}$ varies over prime ideals of $\mathbb{Z}[i]$. This means that if we fix a subinterval $I\subseteq [0,\pi/2)$, then as $X\to \infty$,
\begin{equation}\label{asymp number of angles}
  \#\{ \mathfrak{p}\subset \Z[i]: X<\N(\mathfrak{p})\leq 2X,\ \theta_\mathfrak{p}\in I  \} \sim \frac{|I|}{\pi/2} \ \frac{X}{\log X}.
\end{equation}
Recall that by the Prime Ideal Theorem, $\#\{ \mathfrak{p}\subset \Z[i]: X<\N(\mathfrak{p})\leq 2X\}\sim X/\log X$.

Kubilius \cite{kubilius1955problem} and others studied the existence of prime angles in  narrow sectors.    Ricci  \cite{Ricci}  proved that \eqref{asymp number of angles}  remains valid
 for any interval $I\subset [0,\pi/2)$ of length $|I|>X^{-3/10+\varepsilon}$.
%Coleman \cite{coleman1993rosser} and
By a sieve method, Harman and Lewis \cite{harman2001gaussian} proved the existence of prime angles in somewhat narrower sectors without an asymptotic expression.
 Assuming the generalized Riemann Hypothesis (GRH), it is known that \eqref{asymp number of angles} is valid for any interval $I\subset [0,\pi/2)$ of length $|I|>X^{-1/2 +\varepsilon}$.
It is important to point out that one cannot do better because of the existence of ``forbidden regions'', for instance the interval $(0,1/\sqrt{X})$ does not contain any prime angle of norm less that $X$ (in fact angles for any such integer ideals).
This is a striking difference from the well studied problem of (rational) primes in short intervals, where one expects that any interval $[x,x+y]$ will contain $\sim y/\log x$ primes as soon as $y\gg x^\varepsilon$, for any $\varepsilon>0$, while similarly to  our problem, the Riemann Hypothesis only gives the existence in intervals of length $y\gg x^{1/2+o(1)}$.

Instead of asking about all short sectors, one can ask for the existence of prime angles in ``typical''  sectors, that is in almost all short sectors.
%There is relatively less work on Gaussian primes in almost all narrow sectors.
Assuming GRH,  Parzanchevski and Sarnak \cite{PS2017super}, and Rudnick and Waxman \cite{rudnick2017angles} showed that almost all sectors contain a prime angle, in fact that the asymptotic formula \eqref{asymp number of angles} (at least in a smooth form)  remains valid
for almost all $I$ if $|I|>X^{-1+\varepsilon}$, which is the most we can expect (up to log factors) since the number of prime ideals with norm about $X$ is roughly $  X/\log X$, hence  almost all sectors shorter than $1/X$ will contain no prime angles $\theta_{\mathfrak p}$ with $\N(\mathfrak p)\approx  X$.
Rudnick and Waxman \cite{rudnick2017angles}  gave a precise conjecture about the asymptotic behavior of the number variance, supported by a theorem for a function field analogue of the problem.

\medskip
In this paper, we prove an unconditional  result on Gaussian primes in almost all narrow sectors.
We will say that a property holds for \emph{almost all} narrow sectors $I=(\beta,\beta+\gamma]$ (where $\gamma\approx X^{-\rho}$, $0<\rho<1$, and $X<\N(\mathfrak{p})\leq 2X$ with $X\rightarrow\infty$) if it holds for all $\beta\in[0,\frac{\pi}{2}]$ except on an exceptional set of measure $o(1)$.

\begin{theorem}\label{thm:a.a.}
    Let $0 \leq \rho < 3/5$.         Then \eqref{asymp number of angles} holds for almost all short sectors $I\subset[0,\pi/2)$ of length $ |I|>X^{-\rho}$.
%
 % \begin{equation}\label{eqn:gamma&theta}
  %  \gamma=X^{-\rho} \quad \textrm{and} \quad     0 \leq \rho < 3/5.
  %\end{equation}
 % For almost all $\beta\in[0,\frac{\pi}{2}]$, the number of Gaussian primes $\mathfrak{p}$ satisfying
  %\begin{equation}\label{eqn:fp}
 %   \beta < \theta_\mathfrak{p} \leq\beta+\gamma, \quad \N(\mathfrak{p})\leq X,
 % \end{equation}
 % is asymptotic to $\frac{\gamma X}{\frac{\pi}{2}\log X}$ as $X\rightarrow\infty$.
\end{theorem}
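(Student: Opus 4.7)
The plan is to prove Theorem \ref{thm:a.a.} via a second-moment (variance) argument. Writing
\[
N_X(\beta,\gamma) := \#\{\mathfrak{p}\subset \Z[i]: X<\N(\mathfrak{p})\leq 2X,\ \theta_\mathfrak{p}\in(\beta,\beta+\gamma]\},
\]
I would show that the variance
\[
V(X,\gamma) := \int_0^{\pi/2} \Bigl|N_X(\beta,\gamma) - \tfrac{\gamma}{\pi/2}\cdot\tfrac{X}{\log X}\Bigr|^{2}\, \dd\beta
\]
is $o\bigl(\gamma^{2}(X/\log X)^{2}\bigr)$, whence Chebyshev's inequality implies that \eqref{asymp number of angles} fails for a set of $\beta$ of measure $o(1)$. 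The sharp indicator of the sector is bracketed between Selberg-style smooth minorants and majorants of essentially the same Fourier-decay properties, so it suffices to treat a smoothed version $f_\gamma$.

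Next, I would use the Hecke Gr\"ossencharacters $\Xi_k(\alpha)=(\alpha/|\alpha|)^{4k}$ on $\Z[i]$ (well-defined on ideals, since the unit group is $\{\pm 1,\pm i\}$) to write
\[
N_X(\beta,\gamma) \approx \sum_{k\in\Z} \widehat{f_\gamma}(k)\, e^{-4ik\beta}\, \psi_X(\Xi_k),
\qquad
\psi_X(\Xi_k)=\sum_{X<\N(\mathfrak{p})\leq 2X}\Xi_k(\mathfrak{p})\log\N(\mathfrak{p}),
\]
with $|\widehat{f_\gamma}(k)|\ll \min(\gamma,1/|k|)$ (and rapid decay beyond $|k|\gg X^{\rho+\varepsilon}$ after smoothing). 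The $k=0$ term yields the main term via the Prime Ideal Theorem; Parseval in $\beta\in[0,\pi/2]$ then gives
\[
V(X,\gamma) \ll \sum_{k\neq 0} |\widehat{f_\gamma}(k)|^{2}\,|\psi_X(\Xi_k)|^{2}.
\]
Splitting the $k$-sum dyadically, the matter reduces to proving a mean-value estimate of the shape
\[
\sum_{0<|k|\leq K}|\psi_X(\Xi_k)|^{2} \ll K X^{1+\varepsilon} + X^{2-\delta}
\]
for $K$ up to $X^{\rho}$, with $\delta=\delta(\rho)>0$ when $\rho<3/5$.

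The mean-value bound would be obtained from the explicit formula relating $\psi_X(\Xi_k)$ to the non-trivial zeros of the Hecke $L$-function $L(s,\Xi_k)$, combined with a zero-density estimate of Huxley-type,
\[
\sum_{|k|\leq K} N(\sigma,T,\Xi_k) \ll (KT)^{A(1-\sigma)+\varepsilon},
\]
uniform in the analytic conductor (which grows like $|k|$), plus a Vinogradov--Korobov-type zero-free region for the family. Balancing the Fourier-coefficient decay $\min(\gamma,1/|k|)$ against this density exponent is what fixes the threshold at $\rho<3/5$ (the exponent $3/5$ corresponds to the density exponent $A=12/5$, as in the classical short-interval problem for $\zeta(s)$). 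The main obstacle is to establish the zero-density bound uniformly across the family $\{\Xi_k\}_{|k|\leq K}$: this requires a fourth-moment estimate for $L(s,\Xi_k)$ on the critical line, careful handling of zeros with large imaginary part (since the conductor grows with $k$), and a bootstrap combining a Hal\'asz--Montgomery-type large-values argument with the known analytic properties of these $L$-functions. A secondary technical issue is to ensure that the smoothing of the sector indicator and the dyadic localisation $X<\N(\mathfrak{p})\leq 2X$ do not erode the final exponent.
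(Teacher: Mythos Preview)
Your outline is essentially the same route the paper takes: smooth the sector indicator, Fourier-expand via the Hecke characters $\Xi_k$, compute the variance by Parseval, invoke the explicit formula for each $L(s,\Xi_k)$, and control the resulting sum over zeros by a zero-density estimate together with a Vinogradov--Korobov/Kubilius-type zero-free region, then conclude by Chebyshev. The paper does exactly this, working with a smooth weight $\Phi(\N(\mathfrak p)/X)$ in place of the sharp cutoff $X<\N(\mathfrak p)\le 2X$, and citing the required zero-density theorem (Ricci) and zero-free region (Kubilius) rather than proving them.

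There is, however, a concrete numerical slip in your balancing. The threshold $\rho<3/5$ does \emph{not} correspond to a density exponent $A=12/5$. In this problem the $t$-aspect is harmless (after smoothing in $\N(\mathfrak p)$ one may take $T=K^\varepsilon$), so the operative bound is $N(\sigma;T,K)\ll K^{A(1-\sigma)+o(1)}$, and feeding this into the dyadic sum over $\sigma$ requires $K^{A}<X^{2}$, i.e.\ $K<X^{2/A}$. Thus $\rho<3/5$ corresponds to $A=10/3$, which is precisely Ricci's estimate
\[
N(\sigma;T,K)\ll T\,K^{\frac{10}{3}(1-\sigma)}(\log K)^B.
\]
A genuine Huxley-type exponent $A=12/5$ for this family would yield the much stronger $\rho<5/6$, and is neither used nor claimed here. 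So you should not plan to establish a $12/5$-density via fourth moments and Hal\'asz--Montgomery; quoting Ricci's $10/3$-estimate together with Kubilius' zero-free region $1-\beta\gg(\log V\log\log V)^{-3/4}$ is exactly what the paper does and is all that is needed.
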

%The methods usually make the exceptional set of measure $O((\log X)^{-A})$ for any $A>0$.
Our result is a consequence of an upper bound of the number variance estimated using zero-density theorems.

%As mentioned above, under GRH, Rudnick--Waxman \cite{rudnick2017angles} and Parzanchevski--Sarnak \cite{PS2017super} proved that any $0\leq \rho<1$ is admissible.
%
%In \cite{PS2017super} and \cite{rudnick2017angles}, they also proved repulsion between angles, which means for all individual $\beta$, we can not have $\rho \geq 1/2$. The importance of our result is that, for almost all $\beta$, we can go beyond $1/2$ unconditionally.
%Note that this phenomenon is different from what we expect for rational primes where we hope that the interval $(x,x+x^\varepsilon]$ contains primes for any $\varepsilon>0$ and $x$ large enough.

 We recall that under RH, Selberg \cite{selberg1943normal} showed (in 1943)
that almost all intervals $(x, x + \phi(x) \log^2 x]$ contain primes for any function $\phi(x)$ tending to infinity. See   \cite{SafVau},    \cite{jia1996almost},   \cite[Chap. 9]{harman2012prime}) for unconditional results.

The problem of Gaussian primes in small balls  is   similar in flavour to that of  primes in short intervals, and was studied by Coleman  \cite{coleman1990distribution} who established individual results, both on GRH and unconditionally (see also  \cite{harman2003distribution}), and a result about almost all balls.

We briefly discuss the analogous problems for real quadratic fields in Section~\ref{sec:real quadratic}.

%%%%%%%%%%%%%%%%%%%%%%%%%%%%%%%%%%%%%%%%%%%%%%%%%%%%%%%%%%%%%%%%%
%%%%%                        Section                        %%%%%
%%%%%%%%%%%%%%%%%%%%%%%%%%%%%%%%%%%%%%%%%%%%%%%%%%%%%%%%%%%%%%%%%
\section{Preliminaries}

\subsection{Hecke characters and their L-functions}\label{subsec:L-functions}
%The context in this subsection is in Rudnick--Waxman \cite[\S 4.1]{rudnick2017angles}.
For a non-zero ideal $\mathfrak a=(\alpha) \subseteq \Z[i]$, with generator $\alpha$,  Hecke defined characters $\Xi_k(\alpha) = (\alpha/\bar\alpha)^{2k}$, $k\in \mathbb Z$, which give well defined functions on the ideals of $\mathbb Z[i]$.
In terms of the angles associated to ideals, we have
$e^{i4k\theta_{\mathfrak a}} = \Xi_k(\mathfrak a)$.

To each such character Hecke \cite{Hecke} associated its L-function
$$
  L(s,\Xi_k) := \sum_{0\neq \mathfrak a\subseteq \mathbb Z[i]}
  \frac{\Xi_k(\mathfrak a)}{\N(\mathfrak a)^{s}}
  = \prod_{\substack{\mathfrak p\\{\rm prime}}}
  (1-\Xi_k(\mathfrak p)\N(\mathfrak p)^{-s})^{-1},
  \quad \Re(s)>1.
$$
Note that $L(s,\Xi_k) = L(s,\Xi_{-k})$. Hecke showed that if $k\neq 0$,
these functions have an analytic continuation to the entire complex plane,
and satisfy  a functional equation: % relating $L(1-s,\Xi_k)$ to $L(s,\Xi_k)$: Setting
\begin{equation}\label{functional equation}
  \xi(s,k):=\pi^{-(s+2|k|)}\Gamma(s+2|k|) L(s,\Xi_k) =\xi(1-s,k).
\end{equation}
%which is entire for $k\neq 0$,  the functional equation is
%$$\xi_{k}(s) = \xi_{k}(1-s)$$
We denote $L_\infty(s,k):=\pi^{-(s+2|k|)}\Gamma(s+2|k|).$

\subsection{The zero-free region}
We will need a ``non-standard'' zero free region for $L(s,\Xi_{k})$:
\begin{theorem}[Kubilius \cite{kubilius1955problem}]\label{eqn:zero-free}
  For $k>0$, and $V:=\sqrt{(T+2)^2+(2k)^2}$, if $\rho_{k,n}=\beta_{k,n}+i\gamma_{k,n}$ is a zero of $L(s,\Xi_{k})$, then we have
\begin{equation*}
1-\beta_{k,n}\gg \frac 1{(\log V \log\log V)^{3/4}}, \quad |\gamma_{k,n}|<T.
\end{equation*}
\end{theorem}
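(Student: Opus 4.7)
The plan is to adapt the classical de la Vall\'ee Poussin argument, with the convexity bound on $L(s,\Xi_k)$ replaced by a Weyl--van der Corput estimate uniform in the combined analytic conductor parameter $V$. The starting point is the elementary inequality $3+4\cos\phi+\cos 2\phi = 2(1+\cos\phi)^2\ge 0$. Applied termwise to the Dirichlet series for the logarithmic derivatives, and using $\Xi_k(\mathfrak p)=e^{i4k\theta_{\mathfrak p}}$ together with the factorization $\zeta_K(s)=\zeta(s)L(s,\chi_4)$ of the Dedekind zeta function of $K=\Q(i)$, this yields, for $\sigma>1$ and $t\in\R$,
\[
-\Re\!\left[\,3\,\frac{\zeta_K'}{\zeta_K}(\sigma)+4\,\frac{L'}{L}(\sigma+it,\Xi_k)+\frac{L'}{L}(\sigma+2it,\Xi_{2k})\,\right]\ge 0.
\]

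The next step is to expand each $L'/L$ via the Hadamard factorization of the completed function $\xi(s,k)$ from \eqref{functional equation}. Applying Stirling to $L_\infty(s,k)=\pi^{-(s+2|k|)}\Gamma(s+2|k|)$, the archimedean contribution is $O(\log V)$, since the analytic conductor of $L(s,\Xi_k)$ has size $\asymp V^2$. If $\rho=\beta+i\gamma$ is a non-trivial zero of $L(s,\Xi_k)$ with $|\gamma|<T$, then setting $t=\gamma$ isolates the term $-4/(\sigma-\beta)$, while the pole of $\zeta_K$ at $s=1$ contributes $+3/(\sigma-1)$. Combined with positivity of the remaining zero terms and an optimal choice $\sigma-1\asymp 1-\beta$, this would give only the classical bound $1-\beta \gg 1/\log V$.

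To sharpen this to $(\log V\log\log V)^{-3/4}$, the plan is to replace the crude $\log V$ estimate near $\Re(s)=1$ by a subconvex bound of Chudakov type,
\[
|\log L(\sigma+it,\Xi_k)| \ll (\log V)^{3/4}(\log\log V)^{1/4},
\]
valid uniformly for $1-(\log V)^{-3/4}\le\sigma\le 2$. The input is a Weyl--van der Corput bound on the two-variable exponential sum
\[
\sum_{X<a^2+b^2\le 2X} \exp\!\bigl( i( 4k\,\arg(a+ib)-t\log(a^2+b^2))\bigr),
\]
which controls partial sums of the Dirichlet series for $L(s,\Xi_k)$. Via a Borel--Carath\'eodory argument converting the subconvex bound into an estimate on the local density of zeros, insertion back into the positivity inequality then produces the claimed region; the restriction $|\gamma|<T$ merely reflects the height at which the exponential sum is localized, so that $V^2\asymp T^2+k^2$ serves as the effective conductor.

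The main obstacle is the exponential sum estimate itself: applying Weyl differencing (or van der Corput's $B$-process) uniformly in both the $t$-aspect and the $k$-aspect, for a phase that mixes a logarithm with an arctangent, and tracking the numerical loss at each differencing step to produce precisely the Chudakov-type exponent $3/4$ rather than the classical $1$, is the delicate step. In particular, one must verify that the size parameter seen by the exponential sum is indeed the combined quantity $V=\sqrt{(T+2)^2+(2k)^2}$, not $T$ alone, so that the resulting zero-free region is genuinely uniform in $k$.
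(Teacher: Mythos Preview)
The paper does not prove this theorem; it is quoted from Kubilius \cite{kubilius1955problem} and used as a black-box input to the variance calculation in \S3. There is therefore no argument in the paper against which to compare your outline.

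That said, your sketch follows the historically correct strategy: Kubilius's 1955 paper does rest on a Weyl/van der Corput estimate for the two-dimensional exponential sum over Gaussian integers with phase $4k\,\theta_{\mathfrak a}-t\log\N(\mathfrak a)$, uniform in the joint parameter $V$, followed by the standard machinery (Hadamard product, the $3$--$4$--$1$ inequality, Borel--Carath\'eodory). You are right to flag the exponential-sum bound as the substantive step; everything else is routine, and the exponent $3/4$ is exactly what Weyl differencing (as opposed to Vinogradov's method, which would give $2/3$) yields near $\sigma=1$.

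One correction: your intermediate claim should be a bound on $|L(\sigma+it,\Xi_k)|$, not on $|\log L|$. Bounding $|\log L|$ already presupposes $L\neq 0$ in the region, which is precisely what you are trying to establish; the standard route is to bound $|L|$ in a narrow strip to the left of $\Re s=1$, feed this into Borel--Carath\'eodory/Jensen to control the local density of zeros and hence $-\Re(L'/L)$, and only then run the $3$--$4$--$1$ positivity argument. The $\log\log V$ factor in the final zero-free region comes from the width of the strip in which the Weyl bound is effective, not from the bound on $|L|$ itself.
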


\subsection{The zero-density estimate}

We now introduce a zero-density theorem. We set
$$
N(\sigma;T,K):=\#\{\rho_{k,n}=\beta_{k,n}+i\gamma_{k,n}: 0<k\leq K,\quad |\gamma_{k,n}|<T, \quad \beta_{k,n}\geq \sigma\}.
$$
In his thesis (1976), Ricci \cite{Ricci} showed
\begin{theorem}[\cite{Ricci}]\label{zdt}
For $\sigma\geq 1/2$, $K,T\geq 2$, and $T=o(K) $, we have
\begin{equation*}
N(\sigma;T,K) \ll  T K^{\frac{10}{3}(1-\sigma)} (\log K)^B
\end{equation*}
for some $B>0$.
\end{theorem}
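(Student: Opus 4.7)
The plan is to adapt the Ingham--Huxley--Montgomery zero-density method to the family of Hecke Gr\"ossencharacters $\{\Xi_k\}_{0<k\leq K}$. The two ingredients I would need are (i) a zero-detection lemma converting each zero $\rho_{k,n}=\beta+i\gamma$ with $\beta\geq\sigma$, $|\gamma|\leq T$ into the statement that a short twisted Dirichlet polynomial $\sum_{\N(\mathfrak n)\sim N} a(\mathfrak n)\,\Xi_k(\mathfrak n)\,\N(\mathfrak n)^{-\rho_{k,n}}$ has absolute value $\gg 1$, and (ii) a large-sieve / mean-value inequality controlling how often such polynomials can be simultaneously large across the family.

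For (i) I would introduce a mollifier $M_X(s,\Xi_k)=\sum_{\N(\mathfrak n)\leq X}\mu_{\Z[i]}(\mathfrak n)\,\Xi_k(\mathfrak n)\,\N(\mathfrak n)^{-s}$, so that the Dirichlet coefficients of $L\cdot M_X$ vanish for $2\leq \N(\mathfrak n)\leq X$. Shifting the contour of the Mellin transform
\begin{equation*}
\frac{1}{2\pi i}\int_{(2)} L(\rho+w,\Xi_k)\,M_X(\rho+w,\Xi_k)\,\Gamma(w)\,Y^w\,dw
\end{equation*}
past the pole at $w=0$ (where the residue vanishes since $L(\rho,\Xi_k)=0$), and re-expanding the shifted integral via the functional equation \eqref{functional equation}, I would obtain an approximate identity of the form
\begin{equation*}
1\ll \Big|\sum_{X<\N(\mathfrak n)\leq N_0} c(\mathfrak n)\,\Xi_k(\mathfrak n)\,\N(\mathfrak n)^{-\rho}\Big| + \Big|\sum_{\N(\mathfrak n)\leq M_0} d(\mathfrak n)\,\Xi_k(\mathfrak n)\,\N(\mathfrak n)^{-(1-\rho)}\Big| + O((\log K)^{-A}),
\end{equation*}
with cutoff lengths $N_0,M_0$ depending on $X,Y,\sigma,T,K$. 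A dyadic decomposition then reduces the task to counting zeros for which one fixed dyadic segment of one of these polynomials is $\gg (\log K)^{-A}$.

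The second ingredient is the large-sieve inequality
\begin{equation*}
\sum_{0<k\leq K}\int_{-T}^{T}\Big|\sum_{\N(\mathfrak n)\sim N} a(\mathfrak n)\,\Xi_k(\mathfrak n)\,\N(\mathfrak n)^{-it}\Big|^2 dt \ll (KT+N)(\log KN)^A\sum_{\N(\mathfrak n)\sim N}|a(\mathfrak n)|^2,
\end{equation*}
which by duality reduces to counting pairs of Gaussian ideals $(\mathfrak a,\mathfrak b)$ with comparable norms and small angle difference; this is a lattice-point problem in an annular sector. Combining the plain $L^2$ bound with a Hal\'asz--Montgomery boost based on a fourth-moment estimate for twisted Dirichlet polynomials on the critical line, applied to each dyadic segment, produces a count of zeros of the shape $(KT+N)N^{-2\sigma}(\log K)^B$. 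Optimizing the mollifier length $X$, the smoothing parameter $Y$, and the dyadic scale $N$ in the admissible range dictated by the zero-detection step produces the exponent $\tfrac{10}{3}(1-\sigma)$ in $K$; linearity in $T$ survives because in the regime $T=o(K)$ the $KT$ term of the large sieve dominates the $N$ term.

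The main obstacle will be the uniform control in $k\leq K$ of the archimedean factor $L_\infty(s,k)=\pi^{-(s+2|k|)}\Gamma(s+2|k|)$ when the contours are shifted and the functional equation is invoked: its Stirling asymptotics effectively inflate the analytic conductor by a factor of $k$, and this inflation must be tracked carefully so as not to swamp the main term in the approximate identity above. A secondary difficulty is that the plain second-moment large sieve alone yields only the weaker exponent $4(1-\sigma)$; sharpening to $\tfrac{10}{3}(1-\sigma)$ requires the Huxley/Hal\'asz-type fourth-moment bound for twisted Dirichlet polynomials, which is itself a nontrivial mean-value estimate for $L(\tfrac{1}{2}+it,\Xi_k)$ uniform over the family.
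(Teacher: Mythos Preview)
The paper does not contain a proof of this theorem: it is quoted as a result of Ricci's thesis \cite{Ricci} and used as a black box in the proof of Theorem~\ref{thm:variance}. So there is no ``paper's own proof'' to compare your proposal against.

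That said, your outline is the standard Ingham--Huxley--Montgomery machinery transported to the family $\{\Xi_k\}_{0<k\leq K}$, and is broadly the right template for how such a result is obtained. A few comments on the proposal itself. First, you correctly note that the plain $L^2$ large sieve only gives exponent $4(1-\sigma)$, and that reaching $\tfrac{10}{3}(1-\sigma)$ requires an additional mean-value input; but you are vague about which input. The Huxley exponent $\tfrac{12}{5}$ for $\zeta(s)$ comes from a specific reflection/Hal\'asz argument together with a subconvex-type fourth moment, and it is not immediate that the same numerology produces $\tfrac{10}{3}$ here; you should identify precisely which mean-value estimate for $\sum_k\int|L(\tfrac12+it,\Xi_k)|^4$ (or for the relevant Dirichlet polynomials) you are invoking and check that the optimization over $X,Y,N$ actually lands on $\tfrac{10}{3}$ rather than some nearby exponent. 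Second, your remark about the archimedean factor is well taken: the analytic conductor here is essentially $k^2+t^2$, so under the hypothesis $T=o(K)$ the $k$-aspect dominates, and this is exactly why the final bound is a power of $K$ times a single factor of $T$; you should make this dependence explicit when you shift contours and apply Stirling, rather than leaving it as an ``obstacle''. Third, the large-sieve inequality you write down with $(KT+N)$ is the correct shape, but its proof for angular Hecke characters is not quite the lattice-point count you describe; it is more directly obtained by Gallagher's lemma in the $t$-variable together with orthogonality of the $\Xi_k$ over $k$.
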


%%%%%%%%%%%%%%%%%%%%%%%%%%%%%%%%%%%%%%%%%%%%%%%%%%%%%%%%%%%%%%%%%
%%%%%                        Section                        %%%%%
%%%%%%%%%%%%%%%%%%%%%%%%%%%%%%%%%%%%%%%%%%%%%%%%%%%%%%%%%%%%%%%%%
\section{The number variance}

We wish to get an unconditional result on angles of Gaussian primes in almost all narrow sectors. We recall some definitions:
Pick $f\in C_c^\infty(\mathbb{R})$, which is even and real valued, and for $K\gg1$ define
\begin{equation}\label{def of FK}
F_K(\theta) := \sum_{j\in\mathbb{Z}} f\Big(\frac{K}{\pi/2} \big(\theta-j\frac{\pi}{2}\big)\Big).
\end{equation}
Let $\Phi\in  C_c^\infty(0,\infty)$ and  set
$$
  \psi_{K,X}(\theta)
  :=\sum_{\mathfrak a  } \Phi\Big(\frac{\N(\mathfrak a)}{X}\Big) \Lambda(\mathfrak a) F_K(\theta_{\mathfrak a}  -\theta),
$$
the sum over all powers of prime ideals, with the von Mangoldt function
$\Lambda(\mathfrak a)=\log \N(\mathfrak p)$ if $\mathfrak a=\mathfrak p^r$ is a power of a prime ideal $\mathfrak p$, and equal to zero otherwise.

By \cite[Lemma 3.1]{rudnick2017angles}, the expected value of $\psi_{K,X}(\theta)$ is
$$
  \E(\psi_{K,X}) = \langle \psi_{K,X} \rangle \sim \frac{X}{K} \int_{-\infty}^{\infty} f(x) \dd x \int_{0}^{\infty} \Phi(u) \dd u.
$$
Here for any function $H$ of $\theta$, we define the mean value
\[ \langle H \rangle := \frac{1}{\pi/2} \int_{0}^{\pi/2} H(\theta) \dd \theta.  \]
We wish to study the number variance
\[
  \Var(\psi_{K,X}) = \frac{1}{\pi/2}\int_{0}^{\pi/2} \Big| \psi_{K,X}(\theta) - \E(\psi_{K,X}) \Big|^2 \dd \theta.
\]

\begin{theorem}\label{thm:variance}
  If $K=X^\tau$ with $\tau<3/5$, then
$$
\frac{\Var(\psi_{K,X})}{(\E(\psi_{K,X}))^2} \ll (\log X)^{-A},  \quad \textrm{as }  X\to \infty.
$$
\end{theorem}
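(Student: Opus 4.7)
The plan is a standard second-moment argument: Fourier expand $F_K$ on $\R/(\pi/2)\Z$, apply Parseval to turn the variance into a weighted mean square of twisted prime sums $\psi(X;\Xi_k)$, and bound each term via the explicit formula, combined with the zero-density estimate of Theorem \ref{zdt} and the Kubilius-type zero-free region of Theorem \ref{eqn:zero-free}. Concretely, Poisson summation gives $F_K(\theta) = \frac{1}{K}\sum_k \hat f(k/K)\,e^{4ik\theta}$, and since $e^{4ik\theta_{\mathfrak a}} = \Xi_k(\mathfrak a)$, substituting yields
\[
\psi_{K,X}(\theta) = \frac{1}{K}\sum_{k\in\Z}\hat f(k/K)\,\psi(X;\Xi_k)\,e^{-4ik\theta}, \qquad \psi(X;\Xi_k) := \sum_{\mathfrak a}\Phi\!\left(\tfrac{\N\mathfrak a}{X}\right)\Lambda(\mathfrak a)\Xi_k(\mathfrak a).
\]
The $k=0$ term is exactly $\E(\psi_{K,X})$, so by Parseval on $[0,\pi/2)$,
\[
\Var(\psi_{K,X}) = \frac{1}{K^{2}}\sum_{k\neq 0}|\hat f(k/K)|^{2}\,|\psi(X;\Xi_k)|^{2},
\]
and the rapid decay of $\hat f$ truncates the sum to $1\leq|k|\leq K_0 := K(\log X)^{B}$.

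For each nonzero $k$ the L-function $L(s,\Xi_k)$ is entire, and shifting the Mellin contour in the usual way gives
\[
\psi(X;\Xi_k) = -\sum_{\rho_k}\tilde\Phi(\rho_k)X^{\rho_k} + O(1),
\]
where $\tilde\Phi$ is the Mellin transform of $\Phi$ and $\rho_k=\beta_k+i\gamma_k$ runs over the nontrivial zeros of $L(s,\Xi_k)$. Since $\Phi\in C_c^\infty(0,\infty)$, the transform $\tilde\Phi(\sigma+it)$ has super-polynomial decay in $|t|$, so the zero-sum may be restricted to $|\gamma_k|\leq T_0:=(\log X)^{C}$ with negligible loss. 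Cauchy--Schwarz on the resulting finite sum yields
\[
|\psi(X;\Xi_k)|^{2} \ll (\log X)^{C'}\sum_{\substack{\rho_k\\ |\gamma_k|\leq T_0}}X^{2\beta_k}.
\]

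Summing over $1\leq|k|\leq K_0$ and writing the right-hand side as a Stieltjes integral against $N(\sigma;T_0,K_0)$, Theorem \ref{eqn:zero-free} confines the integration to $\sigma\leq 1-\eta$ with $\eta \gg (\log X)^{-3/4}(\log\log X)^{-3/4}$, while Theorem \ref{zdt} supplies $N(\sigma;T_0,K_0)\ll T_0\,K_0^{10(1-\sigma)/3}(\log K_0)^{B}$. The integrand $X^{2\sigma}K_0^{10(1-\sigma)/3}$ is increasing in $\sigma$ precisely when $2 - \tfrac{10}{3}\tau > 0$, i.e.\ when $\tau<3/5$; in that range the endpoint $\sigma=1-\eta$ dominates, giving
\[
\sum_{1\leq|k|\leq K_0}|\psi(X;\Xi_k)|^{2} \ll (\log X)^{B''} K_0^{10\eta/3} X^{2-2\eta}.
\]
Dividing by $(\E(\psi_{K,X}))^{2}\asymp (X/K)^{2}$ and collecting exponents,
\[
\frac{\Var(\psi_{K,X})}{(\E(\psi_{K,X}))^{2}} \ll (\log X)^{B''}\,X^{-2\eta(1-5\tau/3)} \ll \exp\!\left(-c\,(\log X)^{1/4}(\log\log X)^{-3/4}\right),
\]
which beats every negative power of $\log X$.

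The main obstacle is simply the exponent $10/3$ in Ricci's zero-density estimate: it is exactly what forces $2-\tfrac{10}{3}\tau>0$, hence $\tau<3/5$, and thereby $\rho<3/5$ in Theorem \ref{thm:a.a.}. Any improvement in that exponent would mechanically widen the admissible range. Technically the only care required is to handle uniformity in $k$ and $\gamma$ when passing from individual explicit formulae to the joint second moment; the Kubilius-type zero-free region of Theorem \ref{eqn:zero-free} then upgrades the essentially power-saving estimate into the quantitatively useful $(\log X)^{-A}$ bound, whereas a classical $1/\log$ zero-free region would suffice to give some weaker log-saving.
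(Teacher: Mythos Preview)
Your proof is correct and follows essentially the same route as the paper's: Parseval on the Fourier expansion of $F_K$, the explicit formula for each $\psi(X;\Xi_k)$, Cauchy--Schwarz on the zero sum, and then Ricci's zero-density bound combined with the Kubilius zero-free region to control $\sum_{k,\rho}X^{2\beta}$. The only cosmetic differences are that the paper uses a discrete partition of $[\tfrac12,1]$ into $M=(\log K)^{9/10}$ subintervals in place of your Stieltjes integral and truncates $k$ at $K^{1+\delta}$ rather than $K(\log X)^B$, which yields the slightly weaker saving $\exp\bigl(-c(\log X)^{1/10}\bigr)$ instead of your $\exp\bigl(-c(\log X)^{1/4}(\log\log X)^{-3/4}\bigr)$; both are of course $O((\log X)^{-A})$.
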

Consequently we get, that in this range, for almost all $\theta$, there is a prime ideal $\mathfrak{p}$ with $\N(\mathfrak p)\asymp X$ so that $|\theta-\theta_{\mathfrak p}|<1/K$.

\subsection{The proof of Theorem \ref{thm:variance}}
We have  \cite[Corollary 4.4]{rudnick2017angles}\footnote{there it is formulated assuming GRH.}
  \begin{equation*}
 \psi_{K,X}(\theta)-\ave{  \psi_{K,X} } =
-     \sum_{k\neq 0 } e^{-i4k\theta} \frac 1{ K}\^f(\frac {k}{ K})
\left(  \sum_{n} \tilde \Phi(\rho_{k,n})X^{\rho_{k,n}}
  +O\Big(\frac{ X^{1/2}\log K}{(\log X)^{100}}\Big) \right),
\end{equation*}
the inner sum over all nontrivial zeros $\rho_{k,n} = \beta_{k,n}+i\gamma_{k,n}$ of $L(s,\Xi_k)$.

Computing the mean square over $\theta$ and dividing by the square of the expected value, namely by $(X/K)^2$, gives
$$
 \frac{\Var(\psi_{K,X})}{(\E(\psi_{K,X}))^2} \ll
\frac{1}{X^2} \sum_{k\neq 0} |\^f(\frac {k}{ K}) |^2 \left|  \sum_{n} \tilde \Phi(\rho_{k,n})X^{\rho_{k,n}}
  +O\Big(\frac{X^{1/2} \log K}{(\log X)^{100}}\Big) \right|^2 .
$$

Below we will see that the contribution of the first term is $ O((\log X)^{-A})$.
The contribution of the second term $O\Big(\frac{X^{1/2} \log K}{(\log X)^{100}}\Big)^2 $ is $O(X^{-\epsilon})$ if $K<X^{1-\epsilon}$, so that modulo this fact we have
$$
\frac{\Var(\psi_{K,X})}{(\E(\psi_{K,X}))^2} \ll \frac 1{X^2} \sum_{k\neq 0} |\^f(\frac {k}{ K}) |^2 \left|  \sum_{n} \tilde \Phi(\rho_{k,n})X^{\rho_{k,n}}   \right|^2 + X^{-\epsilon}.
$$

Next, we note that up to a negligible error, we can truncate the sum over $k$ to $0<k<K^{1+\delta}$, for any fixed $0<\delta<1$, where we take $\delta>0$ so that
$$K<X^{\frac 35(1-2\delta)}.
$$
Indeed, since $\^f$ is rapidly decaying, using the trivial bound $\Re \rho_{k,n}\leq 1$, we see that the tail of the sum is bounded by
$$
\frac 1{X^2} \sum_{ k>K^{1+\delta}} |\^f(\frac {k}{ K}) |^2 \left|  \sum_{n} \tilde \Phi(\rho_{k,n})X^{\rho_{k,n}}     \right|^2
\ll  \sum_{ k>K^{1+\delta}} |\^f(\frac {k}{ K}) |^2 \Big( \sum_{n} |\tilde \Phi(\rho_{k,n})| \Big)^2
$$
For each $k\neq 0$, we use the rapid decay of the Mellin transform $|\tilde \Phi(\rho_{k,n})| \ll (1+|\gamma_{k,n}|)^{-100 }$ and the bound
$$\#\{ n: T\leq |\gamma_{k,n}|<T+1\}\ll \log( 2|k|(T+2))$$
to bound the sum over the zeros by
$$
\sum_{n} |\tilde \Phi(\rho_{k,n})|\leq \sum_{T=0}^\infty \log(2|k|) \frac{\log(T+2)}{(1+T)^{100 }}\ll \log(2|k|)
$$
Hence the tail of the sum is bounded by
$$
\frac 1{X^2} \sum_{ k>K^{1+\delta}} |\^f(\frac {k}{ K}) |^2 \left|  \sum_{n} \tilde \Phi(\rho_{k,n})X^{\rho_{k,n}}     \right|^2
\ll  \sum_{ k>K^{1+\delta}} |\^f(\frac {k}{ K}) |^2 (\log k)^2 \ll K^{-100}
$$
on using $|\^f(y)|\ll |y|^{-100/\delta}$ for $|y|>1$. Therefore
$$
\frac{\Var(\psi_{K,X})}{(\E(\psi_{K,X}))^2} \ll \frac 1{X^2} \sum_{0<k<K^{1+\delta}} |\^f(\frac {k}{ K}) |^2 \left|  \sum_{n} \tilde \Phi(\rho_{k,n})X^{\rho_{k,n}}     \right|^2 +  X^{-\epsilon}.
$$

We may also restrict the sum to zeros with $|\gamma_{k,n}|< K^\varepsilon$, at the cost of another negligible error, again by using the rapid decay of $\tilde \Phi$. Thus we have
$$
\frac{\Var(\psi_{K,X})}{(\E(\psi_{K,X}))^2} \ll W+X^{-\epsilon}
$$
where
$$
W:= \frac 1{X^2} \sum_{0<k<K^{1+\delta}} |\^f(\frac {k}{ K}) |^2 \bigg|  \sum_{n:|\gamma_{k,n}|<K^\varepsilon} \tilde \Phi(\rho_{k,n})X^{\rho_{k,n}} \bigg|^2.
$$
By the Cauchy--Schwarz inequality,
$$
W\leq  \frac 1{X^2} \sum_{0<k<K^{1+\delta}} |\^f(\frac {k}{ K}) |^2   \sum_{n:|\gamma_{k,n}|<K^\varepsilon}
| \tilde \Phi(\rho_{k,n})|  \sum_{n:|\gamma_{k,n}|<K^\varepsilon}| \tilde \Phi(\rho_{k,n})| X^{2\beta_{k,n}}.
$$
Using  the rapid decay of $\tilde \Phi$ and the logarithmic density of zeros gives
$$
\sum_{n:|\gamma_{k,n}|<K^\varepsilon}
| \tilde \Phi(\rho_{k,n})|   \ll \log K
$$
so that
$$
W\ll   \frac{\log K}{X^2} \sum_{0<k<K^{1+\delta}} |\^f(\frac {k}{ K}) |^2    \sum_{n:|\gamma_{k,n}|<K^\varepsilon}| \tilde \Phi(\rho_{k,n})| X^{2\beta_{k,n}}.
$$

Let
$$M= (\log K)^{9/10} ,\qquad  \Delta = \lceil \frac {2}{\delta} \rceil
$$
so that for $K\gg 1$, there are no zeros of $L(s,\Xi_k)$ with real part $\beta_{k,n}>1-\frac{\Delta}{M}$,
by Theorem~\ref{eqn:zero-free}.
We use a dyadic decomposition to bound $W$:
\begin{equation*}
\begin{split}
W&\ll \frac {\log K} {X^2} \sum_{0<k<K^{1+\delta}}  \sum_{m=0}^{M-\Delta  }
\sum_{0\leq T<K^\varepsilon}
\sum_{\substack{ \frac 12 +\frac{m }{2M}\leq \beta_{k,n}<\frac 12 +\frac{m+1}{2M} \\ T\leq |\gamma_{k,n}|<T+1}}   | \tilde \Phi(\rho_{k,n}) |  X^{2\beta_{k,n}}
\\
&\ll  \frac  {\log K}{X^2} \sum_{0<k<K^{1+\delta}}     \sum_{m=0}^{M-\Delta   }
\sum_{0\leq T<K^\varepsilon}
\sum_{\substack{ \frac 12 +\frac{m }{2M}\leq \beta_{k,n}<\frac 12 +\frac{m+1}{2M} \\ T\leq |\gamma_{k,n}|<T+1}}
X^{1 + \frac{m+1}{ M}} \frac{1}{(T+1)^{100}}
\\
&\ll
\frac  {\log K}{X^2} \sum_{m=0}^{M-\Delta   }X^{1 + \frac{m+1}{ M}}  \sum_{0\leq T<K^\varepsilon} \frac{1}{(T+1)^{100}}   \Big(
 \sum_{0<k<K^{1+\delta}}\sum_{\substack{ \frac 12 +\frac{m }{2M}\leq \beta_{k,n}<\frac 12 +\frac{m+1}{2M} \\ T\leq \gamma_{k,n}<T+1}} 1
\Big)
\\
&\leq
\frac  {X^{1/M} \log K}{X } \sum_{m=0}^{M-\Delta   }X^{  \frac{m }{ M}}  \sum_{0\leq T<K^\varepsilon} \frac{1}{(T+1)^{100}}
N(\frac 12 +\frac m{2M};T,K^{1+\delta}).
\end{split}
\end{equation*}

Now use the zero-density theorem (Theorem~\ref{zdt}) to bound, for $T<K^\varepsilon$,
$$
N(\frac 12 +\frac m{2M};T,K^{1+\delta}) \ll T K^{(1+\delta)\frac{10}{3}(\frac 12-\frac{m}{2M}) } (\log K)^B.
$$
Hence
\begin{equation*}
\begin{split}
W&\ll
\frac  {X^{1/M} (\log K)^{B+1}}{X } \sum_{m=0}^{M-\Delta   }X^{  \frac{m }{ M}}  \sum_{1\leq T<K^\varepsilon} \frac{T}{T^{100}}
K^{(1+\delta)\frac{10}{3}(\frac 12-\frac{m}{2M}) }
\\
&\ll
\frac  {X^{1/M} (\log K)^{B+1}}{X } \sum_{m=0}^{M-\Delta   }X^{  \frac{m }{ M}}K^{(1+\delta)\frac{10}{3}(\frac 12-\frac{m}{2M})}
\\
&\ll
\frac  { K^{(1+\delta)\frac{5}{3}} }{X }X^{1/M} (\log K)^{B+1}
 \sum_{m=0}^{M-\Delta   } \Big(  \frac{X} {K^{(1+\delta)\frac 53}} \Big)^{\frac{m}{M}}.
\end{split}
\end{equation*}
Since we assume that $K<X^{\frac 35(1-2\delta)}$ (here $\delta>0$ is arbitrarily small),   so that
$$
\frac{X} {K^{(1+\delta)\frac 53}} >X^{1-(1+\delta)\frac 53 (1-2\delta )\frac 35} = X^{\delta+2\delta^2}>X^\delta
$$
we have
$$
 \sum_{m=0}^{M-\Delta   } \Big(  \frac{X} {K^{(1+\delta)\frac 53}} \Big)^{\frac{m}{M}} \ll
 \Big(  \frac{X} {K^{(1+\delta)\frac 53}} \Big)^{1-\frac {\Delta}M}
$$
so that we obtain
$$
W\ll X^{1/M}(\log K)^{B+1} \Big(  \frac{X} {K^{(1+\delta)\frac 53}} \Big)^{ -\frac {\Delta}M}
\ll
\frac{ (\log K)^{B+1}}{X^{\frac{\delta \Delta-1}{M}}}.
%\ll (\log K)^{B+1} \Big(  \frac{X} {K^{(1+\delta)\frac 53}} \Big)^{ -\frac {\Delta}{2M}}
$$

Since we took $\Delta = \lceil 2/\delta \rceil$, we have $\frac{\delta \Delta-1}{M}\geq \frac 1M$, so that
\begin{multline*}
W\ll \frac{ (\log K)^{B+1}}{X^{\frac{ 1}{M}}} \ll   (\log K)^{B+1} \exp(-  \log X/(\log K)^{9/10})
\\
\ll \exp(- \frac 12 (\log X)^{1/10} )
= O((\log X)^{-A}).
\end{multline*}
 Thus we see that if $K<X^{\frac 35(1-2\delta)}$,
$$
 \frac{\Var(\psi_{K,X})}{(\E(\psi_{K,X}))^2} \ll (\log X)^{-A}.
$$
This completes the proof of Theorem \ref{thm:variance}.

\subsection{Primes vs prime powers}

Now set
\[
  \psi_{K,X}^{\mathrm{prime}}(\theta)
  :=\sum_{\mathfrak p} \Phi\Big(\frac{\N(\mathfrak p)}{X}\Big) F_K(\theta_{\mathfrak p}  -\theta) \log \N(\mathfrak p),
\]
the sum over all prime ideals. Let
\[
  \Var(\psi_{K,X}^{\mathrm{prime}}) = \frac{1}{\pi/2}\int_{0}^{\pi/2} \Big| \psi_{K,X}^{\mathrm{prime}}(\theta) - \E(\psi_{K,X}^{\mathrm{prime}}) \Big|^2 \dd \theta.
\]

\begin{corollary}\label{cor:var-prime}
  If $K=X^\tau$ with $\tau<3/5$, then
  \[
    \frac{\Var(\psi_{K,X}^{\mathrm{prime}})}{(\E(\psi_{K,X}^{\mathrm{prime}}))^2}
    \ll (\log X)^{-A}, \quad \textrm{as } X\to \infty.
  \]
\end{corollary}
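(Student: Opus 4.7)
The plan is to derive the corollary from Theorem~\ref{thm:variance} by showing that the contribution of proper prime powers is negligible in both mean and mean square. Writing
\[
  D(\theta) := \psi_{K,X}(\theta) - \psi_{K,X}^{\mathrm{prime}}(\theta) = \sum_{r \geq 2}\sum_{\mathfrak p} \Phi\Bigl(\frac{\N(\mathfrak p)^r}{X}\Bigr) F_K(\theta_{\mathfrak p^r} - \theta)\log\N(\mathfrak p),
\]
the key arithmetic input is that $\N(\mathfrak p^r)\asymp X$ forces $\N(\mathfrak p) \asymp X^{1/r}$, so the total number of prime powers in the sum is $\ll \sum_{r \geq 2} X^{1/r} \ll X^{1/2}$, compared to $\asymp X/\log X$ primes. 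Hence $D$ should be negligible relative to $\psi_{K,X}$ at the scales we care about.

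First I would estimate the mean. The construction $F_K(\theta)=\sum_j f\bigl(\tfrac{K}{\pi/2}(\theta - j\pi/2)\bigr)$ with $f$ compactly supported makes $F_K$ a periodic sum of bumps of height $O(1)$ and width $O(1/K)$, so $\langle |F_K(\cdot - \theta_0)|\rangle \ll 1/K$ uniformly in $\theta_0$. Combined with the trivial prime-count bound $\sum_{\N\mathfrak p \asymp X^{1/r}} \log \N\mathfrak p \ll X^{1/r}$ and summing over $r \geq 2$, this yields $\langle |D| \rangle \ll X^{1/2}/K$. Since $\E(\psi_{K,X}) \asymp X/K$, it follows that $\E(\psi_{K,X}^{\mathrm{prime}}) = \E(\psi_{K,X})(1 + O(X^{-1/2}))$, so the normalizations in the two variance ratios are interchangeable up to a harmless factor.

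Next I would bound the $L^2$ mean of $D$ by combining a pointwise bound with the mean bound. Since $F_K$ and $\Phi$ are uniformly bounded, the trivial count gives $\|D\|_\infty \ll X^{1/2}\log X$, and hence
\[
  \langle D^2 \rangle \leq \|D\|_\infty \cdot \langle |D|\rangle \ll \frac{X\log X}{K}.
\]
The triangle inequality in $L^2(\dd\theta)$ then yields
\[
  \sqrt{\mathrm{Var}(\psi_{K,X}^{\mathrm{prime}})} \leq \sqrt{\mathrm{Var}(\psi_{K,X})} + \sqrt{\langle D^2\rangle},
\]
and dividing by $\E(\psi_{K,X}^{\mathrm{prime}})^2 \asymp (X/K)^2$ produces two contributions: by Theorem~\ref{thm:variance} the first is $\ll (\log X)^{-A}$, while the second is $\ll K\log X/X$, which under $K = X^\tau$ with $\tau < 3/5$ is $\ll X^{-2/5+o(1)}$, far smaller than any negative power of $\log X$.

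This argument is essentially routine; the only points of care are the trivial nature of the prime-power count (dominated by the $r=2$ term, $\ll X^{1/2}$) and the fact that $F_K$ is not assumed non-negative, so the bounds above are understood for $|F_K|$ and $|\Phi|$. I do not foresee a substantive obstacle: the corollary is essentially automatic once one observes that prime powers form a set of density $\ll X^{1/2}$, much sparser than primes, and too sparse to perturb the angular distribution at scale $1/K$ when $K \ll X^{3/5}$.
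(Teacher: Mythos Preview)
Your proposal is correct and follows essentially the same route as the paper: bound the mean of the prime-power discrepancy $D$ by $O(X^{1/2}/K)$, bound $\langle D^2\rangle$ by $O(X/K)$ (up to a harmless $\log X$), and finish with the $L^2$ triangle inequality together with Theorem~\ref{thm:variance}. The only cosmetic difference is that the paper obtains $\langle D^2\rangle \ll X/K$ via the Fourier side (Parseval, as in \cite[Lemma~4.8]{rudnick2017angles}, using that each Hecke-character coefficient of $D$ is $O(X^{1/2})$), whereas you use the cruder but perfectly adequate bound $\langle D^2\rangle \le \|D\|_\infty\,\langle |D|\rangle$; both give the same conclusion.
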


\begin{proof}
By \cite[Lemma 3.1]{rudnick2017angles} again, the mean value of $\psi_{K,X}^{\mathrm{prime}}(\theta)$ is
\begin{equation}\label{eqn:<phi>}
   \E(\psi_{K,X}^{\mathrm{prime}})
   = \langle \psi_{K,X}^{\mathrm{prime}} \rangle \sim
   \langle \psi_{K,X} \rangle
   \sim \frac{X}{K} \int_{-\infty}^{\infty} f(x) \dd x \int_{0}^{\infty} \Phi(u) \dd u.
\end{equation}
Moreover,
\[
  |\langle \psi_{K,X}^{\mathrm{prime}} \rangle -
   \langle \psi_{K,X} \rangle | \ll \frac{X^{1/2}}{K}.
\]
And by the prime ideal theorem, we have
\[
  \sum_{\mathfrak a} \Phi\Big(\frac{\N(\mathfrak a)}{X}\Big) \Xi_k(\mathfrak a) \Lambda(\mathfrak a)
  = \sum_{\mathfrak p} \Phi\Big(\frac{\N(\mathfrak p)}{X}\Big) \Xi_k(\mathfrak p) \log \N(\mathfrak p) + O(X^{1/2}).
\]
Then by the same proof as in \cite[Lemma 4.8]{rudnick2017angles}, we have
\[
  \langle |\psi_{K,X} - \psi_{K,X}^{\mathrm{prime}}|^2 \rangle
  \ll \frac{X}{K}.
\]
Using the triangle inequality as in \cite[\S4.4]{rudnick2017angles}, together with the above estimates and Theorem \ref{thm:variance}, we complete the proof of our corollary.
\end{proof}

\subsection{Proof of Theorem~\ref{thm:a.a.}}
By Chebyshev's inequality and Corollary \ref{cor:var-prime}, we find that
\begin{equation}\label{eqn:psi_beta}
 \psi_{K,X}^{\mathrm{prime}}(\beta)\sim \E(\psi_{K,X}^{\mathrm{prime}}) \sim
     \frac{X}{K} \int_{-\infty}^{\infty} f(x) \dd x \int_{0}^{\infty} \Phi(u) \dd u.
\end{equation}
for almost all $\beta\in [0,\pi/2)$.

%\subsection{A smooth counting}\label{unsmoothing}

Theorem~\ref{thm:a.a.} follows by removing the smoothing.
Let $f_\varepsilon^+(y)$ be a smooth function depending on $\varepsilon$ satisfying that $f_\varepsilon^+(x)=1$ if $x\in[0,1]$, $f_\varepsilon^+(x)\in[0,1]$ if $x\in[-\varepsilon,0]\cup[1,1+\varepsilon]$, and
$f_\varepsilon^+(x)=0$ otherwise.
Here $\varepsilon$ is a small positive constant.
Define  $F_{K,\varepsilon}^+(\theta)$ as in \eqref{def of FK},  that is
\[
F_{K,\varepsilon}^+(\theta) = \sum_{j\in\mathbb{Z}} f_\varepsilon^+\Big(\frac{K}{\pi/2} \big(\theta-j\frac{\pi}{2}\big)\Big).
 \]
%which is $\pi/2$-periodic, and localized on a scale of about $\gamma$.
%Here we view $\gamma=\frac{\pi/2}{K}$.
Similarly, we can choose $\Phi_\varepsilon^+$ to be a smooth function depending on $\varepsilon$ satisfying that $\Phi_\varepsilon^+(u)=1$ if $u\in[1,2]$, $\Phi_\varepsilon^+(u)\in[0,1]$ if $u\in[1-\varepsilon,1]\cup[2,2+\varepsilon]$, and
$\Phi_\varepsilon^+(u)=0$ otherwise.

Let
\[
  \begin{split}
     \mathcal A & = \{ \mathfrak{p}\subseteq \mathbb{Z}[i]: \theta_\mathfrak{p}\in I:= [\beta, \beta+\frac{\pi/2}{K}],\ X<\N(\mathfrak{p})\leq 2X \}.
  \end{split}
\]
Note that for $X>2^{1/\varepsilon}$,
\begin{equation*}%\label{eqn:sum_A2SmoothCount}
  \sum_{\mathfrak{p}\in\mathcal{A}} 1 \leq
  \sum_{\mathfrak{p}}  \Phi_\varepsilon^+\Big(\frac{\N(\mathfrak{p})}{X}\Big) F_{K,\varepsilon}^+(\theta_\mathfrak{p}-\beta)
  \leq \frac{1+\varepsilon}{\log X}\sum_{\mathfrak{p}}  \Phi_\varepsilon^+\Big(\frac{\N(\mathfrak{p})}{X}\Big) F_{K,\varepsilon}^+(\theta_\mathfrak{p}-\beta) \log \N(\mathfrak{p}).
\end{equation*}
For $\beta\in[0,\pi/2)$ satisfying \eqref{eqn:psi_beta} with $f=f_\varepsilon^+$ and $\Phi=\Phi_\varepsilon^+$, we have
\begin{equation*}%\label{eqn:sum_A2SmoothCount}
  \sum_{\mathfrak{p}\in\mathcal{A}} 1
  \leq (1+2\varepsilon)^4 \frac{X}{K\log X}
  = (1+2\varepsilon)^4\frac {|I|}{\pi/2} \frac{X}{\log X},
 % = (1+2\varepsilon)^4 \frac{\gamma}{\pi/2}\frac{X}{\log X},
\end{equation*}
for $X>X(\varepsilon)$ sufficiently large.

Similarly, we can obtain a lower bound of $\sum_{\mathfrak{p}\in\mathcal{A}} 1$ by another smooth counting, getting that for $\beta\in[0,\pi/2)$ satisfying \eqref{eqn:psi_beta} with $f=f_\varepsilon^-$ and $\Phi=\Phi_\varepsilon^-$, we have
\begin{equation*}%\label{eqn:sum_A2SmoothCount}
  \sum_{\mathfrak{p}\in\mathcal{A}} 1  \geq
  %(1-2\varepsilon)^4 \frac{X}{K\log X}=
  (1-2\varepsilon)^4\frac {|I|}{\pi/2} \frac{X}{\log X},
\end{equation*}
for $X>X(\varepsilon)$.
Taking the limit $\varepsilon\rightarrow0$, we obtain an asymptotic formula for $\sum_{\mathfrak{p}\in\mathcal{A}} 1$ for almost all $\beta$. This completes the proof of Theorem \ref{thm:a.a.}.

%This is standard, so we will only consider a smooth counting from now on. That is, we will pick $f\in C_c^\infty(\mathbb{R})$, which is even and real valued, and for $K\gg1$ define
%\[ F_K(\theta) := \sum_{j\in\mathbb{Z}} f\Big(\frac{K}{\pi/2} \big(\theta-j\frac{\pi}{2}\big)\Big). \]
%Here we view $K\asymp \gamma^{-1}$.

\section{Prime angles for real quadratic fields} \label{sec:real quadratic}
One can also study questions about angles related to the representation of primes as norms in a real quadratic field, similar to those for Gaussian primes.

 Let $E$  be a real quadratic field, and $\epsilon=\epsilon_E>1$ the fundamental unit, so that the group of units is
$\  \{\pm \epsilon^n:n\in \Z\}$.  %We denote by $\Norm$ the norm form of $E$.
For simplicity, we will assume that the fundamental unit has negative norm, and that the (narrow) class number of $E$   is one, so that all ideals are principal (Gauss conjectured that this occurs infinitely often).
%For any (principal) ideal $\mathfrak a = (\alpha)$, we set
%\begin{equation*}\label{def of lambda}
%\lambda(\mathfrak a) = |\frac{\alpha}{\tilde \alpha}|^{\frac{i\pi }{\log \epsilon}}
%\end{equation*}
%This is independent of the choice of generator ($\lambda$ is Hecke's fundamental Grossencharacter).
Given a prime $p$ which splits (completely) in $E$, to any solution of $\Norm_{E/\Q}(\alpha) =   p$,
that is to any generator of an ideal $\mathfrak p\mid (p)$, we associate
an angle variable
$$t(\mathfrak p) \in \R/(2\log\epsilon)\Z\simeq S^1  $$
by
  $$
 %\lambda(\mathfrak p):=
 \exp( \frac{i\pi t(\mathfrak p)}{\log \epsilon})= |\frac \alpha{\tilde \alpha}|^{\frac{i\pi}{\log \epsilon}}
$$
(where $\alpha\mapsto \tilde \alpha$ is the Galois involution of $E$),
which is independent on the choice of generator of the ideal $\mathfrak p$. Note that for the Galois conjugate ideal we have $t(\tilde{\mathfrak p}) = -t(\mathfrak p) \bmod 2\log \epsilon$.

 For example, take   $E=\Q(\sqrt{2})$. The ring of integers is $O_E=\Z[\sqrt{2}]$,  the fundamental unit $\epsilon=1+\sqrt{2}$ has negative norm, and the class number (both wide and narrow) is one.
 The split primes are those satisfying $p=\pm 1\bmod 8$. For every such prime,   we can represent both $\pm p$ as a norm, that is solve $a^2-2b^2=\pm p$. The corresponding angle parameters describe the relative size of the solution coordinates $a$ and $b$.

 Hecke \cite{Hecke} showed that as $p$ varies over split primes, the corresponding angle parameters become uniformly distributed in $\R/(2\log\epsilon)\Z\simeq S^1$.

Using ideas similar to those for the case of Gaussian primes, one can show results analogous to Theorem~\ref{thm:a.a.}.
The non-standard zero-free region for the corresponding L-functions are due to Coleman \cite{coleman1990zero}.
The zero-density theorem needed can be proved along the lines of Theorem~\ref{zdt}.

%%%%%%%%%%%%%%%%%%%%%%%%%%%%%%%%%%%%%%%%%%%%%%%%%%%%%%%%%%%%%%%%%%%
%%%%%                        References                       %%%%%
%%%%%%%%%%%%%%%%%%%%%%%%%%%%%%%%%%%%%%%%%%%%%%%%%%%%%%%%%%%%%%%%%%%
%\medskip


\begin{thebibliography}{10}


%\bibitem{baker2001difference}
%R.C. Baker, G. Harman, and J. Pintz,
%The difference between consecutive primes. II.
%{\em Proc. London Math. Soc. (3),} {\bf 83}(3), 532--562, 2001.

\bibitem{coleman1990distribution}
M.D. Coleman,
The distribution of points at which binary quadratic forms are prime.
{\em Proc. Lond. Math. Soc. (3)} {\bf 61}, 433--456, 1990. %MR1069510

\bibitem{coleman1990zero}
M.D. Coleman,
A zero-free region for the Hecke L-functions.
{\em Mathematika} {\bf 37}, no. 2, 287--304, 1990.

%\bibitem{coleman1993rosser}
%M.D. Coleman,
%The Rosser-Iwaniec sieve in number fields.
%{\em Acta Arith.}, 65, 53--83, 1993. %MR1239243

\bibitem{harman2012prime}
G. Harman,
{\em Prime-Detecting Sieves.(LMS-33)}.
Princeton university press, 2012.

\bibitem{harman2003distribution}
G. Harman, A. Kumchev and P.A. Lewis,
The distribution of prime ideals of imaginary quadratic fields, {\em Trans. Amer. Math. Soc.} 356, 599--620, 2003. %MR2022713

\bibitem{harman2001gaussian}
G. Harman and P. Lewis,
Gaussian primes in narrow sectors.
{\em Mathematika,} {\bf 48}, no. 1-2, 119--135, 2001.

%\bibitem{heathbrown1988number}
%D.R. Heath-Brown,
%The number of primes in a short interval,
%{\em J. Reine Angew. Math.} {\bf 389}, 22--63, 1988. %MR0953665

\bibitem{Hecke}
E. Hecke,
Eine neue Art von Zetafunktionen und ihre Beziehungen zur Verteilung der Primzahlen. I.
{\em Math. Z.} 1, 357--376, 1918.
II, {\em Math. Z.} 6, 11--51, 1920.

%\bibitem{huxley1972difference}
%M.N. Huxley,
%On the difference between consecutive primes,
%{\em Invent. math.} 15, 164--170, 1972. %MR0292774

%\bibitem{iwaniec2004analytic}
%H.~Iwaniec and E.~Kowalski.
%\newblock {\em Analytic number theory}, volume~53.
%\newblock American Mathematical Society Providence, 2004.

\bibitem{jia1996almost}
C. Jia,
Almost all short intervals containing prime numbers,
{\em Acta Arith.} {\bf 76}:1, 21--84, 1996. %MR1390568

%\bibitem{kubilyus1950distribution}
%I. Kubilyus. The distribution of Gaussian primes in sectors and contours.
%{\em Leningrad. Gos. Univ. Uc. Zap. Ser. Mat. Nauk} 137(19) (1950), 40--52.

\bibitem{kubilius1955problem}
J. Kubilius,
On a problem in the $n$-dimensional analytic theory of numbers. \emph{(Lithuanian) Vilniaus Valst. Univ. Mokslo Darbai. Mat. Fiz. Chem. Mokslu Ser.} 4, 5--43, 1955.

\bibitem{PS2017super}
O. Parzanchevski and P. Sarnak,
Super-Golden-Gates for $PU(2)$.
{\em arXiv:1704.02106}, 2017.

\bibitem{Ricci}
S.J. Ricci,
Local distribution of primes.
{\em Thesis (Ph.D.)-University of Michigan}, 1976.

\bibitem{rudnick2017angles}
Z. Rudnick and  E. Waxman,
Angles of Gaussian primes. To appear in the Israel Jour. of Math.
{\em arXiv:1705.07498}, 2017.


\bibitem{SafVau}
B. Saffari and R. C. Vaughan.
On the fractional parts of $x/n$ and related sequences. II.
Ann. Inst. Fourier (Grenoble) 27 (1977), no. 2,  1--30.

\bibitem{selberg1943normal}
A. Selberg,
On the normal density of primes in small intervals, and the difference between consecutive primes.
{\em Arch. Math. Naturvid.}, {\bf 47}(6):87--105, 1943.


%\bibitem{titchmarsh1986theory}
%E. C. Titchmarsh. {\em The theory of the Riemann zeta-function}. Second edition. Edited and with a preface by D. R. Heath-Brown. The Clarendon Press, Oxford University Press, New York, 1986. x+412 pp.


\end{thebibliography}
\end{document}